\swapnumbers \numberwithin{equation}{section}
\theoremstyle{plain}
\newtheorem{thm}{Theorem}[subsection]
\newtheorem{prop}[thm]{Proposition}
\newtheorem{cor}[thm]{Corollary}
\theoremstyle{definition}
 \newcommand{\To}{\longrightarrow}
\DeclareMathOperator{\Int}{{\rm Int}}
\def\Int{\protect\operatorname{Int}}
\def\Z{{\mathbb Z}}
\def\R{{\mathbb R}}
\def\1{\hbox{\rm\rlap {1}\hskip.03in{\rom I}}}
\def\Bbbone{{\rm1\mathchoice{\kern-0.25em}{\kern-0.25em}
{\kern-0.2em}{\kern-0.2em}I}}
\long\def\forget#1\forgotten{} %
\newcommand\ver[1]{\marginpar{\tiny Changed in Ver \VER}}
\date{\today}
\begin{document}

\title[On homology of the complements]{On homology of  complements of compact sets in Hilbert Cube }

\author[A.~Amarasinghe]{Ashwini Amarasinghe}

\author[A.~Dranishnikov]{Alexander  Dranishnikov$^{1}$}

\thanks{$^{1}$Supported by NSF, grant DMS-1304627}

\address{Ashwini Amarasinghe, Department of Mathematics, University
of Florida, 358 Little Hall, Gainesville, FL 32611-8105, USA}
\email{aswini.rc@math.ufl.edu}

\address{Alexander N. Dranishnikov, Department of Mathematics, University
of Florida, 358 Little Hall, Gainesville, FL 32611-8105, USA}
\email{dranish@math.ufl.edu}

\keywords{cohomological dimension, infinite dimensional compacta}

\begin{abstract}
We introduce the notion of spaces with weak relative cohomology and  show the acyclicity of the complement $Q\setminus X$ 
in the Hilbert cube $Q$ of a compactum $X$ with weak relative cohomology. As a corollary we obtain the acyclicity of the complement results when

(a) $X$ is weakly infinite dimensional;

(b) $X$ has finite cohomological dimension.

\end{abstract}

\maketitle \tableofcontents

\section {Introduction}
This paper was motivated in part by a recent results of Belegradek and Hu~\cite{BH} about connectedness properties of
the space ${\mathcal R}^k_{\ge 0}(\R^2)$ of non-negative curvature metrics on $\R^2$ where $k$ stands for $C^k$ topology.
In particular, they proved that the complement ${\mathcal R}^k_{\ge 0}(\R^2)\setminus X$ is connected for every finite dimensional
$X$. In~\cite{A} this result was extended to weakly infinite dimensional spaces $X$. The main topological idea of these results is that a 
weakly infinite dimensional compact set cannot separate the Hilbert cube. 

Banakh and Zarichnyi posted in~\cite{P} a problem (Problem Q1053) whether the complement $Q\setminus X$
to a weakly infinite dimensional
compactum $X$ in the Hilbert cube $Q$  is acyclic. They were motivated by the facts that $Q\setminus X$
is acyclic in the case when $X$ is finite dimensional~\cite{K} and when it is countably dimensional
(see~\cite{BCK}). It turns out that the problem was already answered affirmatively by Garity and Wright in the 80s~\cite{GW}. It follows from 
Theorem 4.5~\cite{GW}  which states that finite codimension closed subsets of the Hilbert cube are strongly infinite dimensional.
In this paper we found some sufficient cohomological conditions on compact metric spaces  for the acyclicity of the complement in the Hilbert cube
which give an alternative solution for the  Banakh-Zarichnyi problem. 

We introduced a cohomological
version of the concept of strongly infinite dimensional spaces. We call a spaces which is not cohomologically strongly infinite dimensional
as {\em spaces with weak  relative cohomology}. The main result of the paper is an acyclicity statement for the complement 
of compacta with weak relative cohomology in the Hilbert cube.

\section{Main Theorem}

\subsection{Alexander Duality} We recall that for a compact set $X\subset S^n$ there is a natural isomorphism
$AD_n:H^{n-k-1}(X)\to H_k(S^n\setminus X)$ called the Alexander duality. The naturality means that for a closed subset $Y\subset X$
there is a commutative diagram (1):
$$
\begin{CD}
H^{n-k-1}(X) @>AD_n>> \tilde H_k(S^n\setminus X)\\
@VVV @VVV\\
H^{n-k-1}(Y) @ >AD_n>>\tilde H_k(S^n\setminus Y).
\end{CD}
$$
Here we use the singular homology groups and the \v Cech cohomology groups.
We note  the Alexander Duality commutes with the suspension isomorphism $s$ in the diagram (2):
$$
\begin{CD}
H^{n-k-1}(X) @>AD_n>> \tilde H_k(S^n\setminus X)\\
@VsVV @V\cong VV\\
H^{n-k}(\Sigma X) @ >AD_{n+1}>>\tilde H_k(S^{n+1}\setminus \Sigma X).
\end{CD}
$$
This the diagram can be viewed as a special case of the Spanier-Whitehead duality~\cite{W}.

Since $B^n/\partial B^n=S^n$, the Alexander Duality in the $n$-sphere $S^n$ and its property
can be restated verbatim for the $n$-ball $B^n$ for subspaces $X\subset B^n$  with $X\cap\partial B^n\ne \emptyset$ in terms of relative cohomology groups
$$
H^{n-k-1}(X,X\cap\partial B^n) \stackrel{AD}\To H_k(\Int B^n\setminus X).
$$

Note that for a pointed space $X$ the reduced suspension $\Sigma X$ is the quotient space $(X\times I)/(X\times\partial I\cup x_0\times I)$ and the
suspension isomorphism $s:H^i(X,x_0)\to H^{i+1}(\Sigma X)=H^{i+1}(X\times I,X\times\partial I\cup x_0\times I)$ is defined by a cross product with the fundamental class $\phi\in H^1(I,\partial I)$, $s(\alpha)=\alpha\times\phi$. Note that $\alpha\times\phi=\alpha^*\cup\phi^*$ 
in $H^{i+1}(X\times I,X\times\partial I\cup x_0\times I)$ where $\alpha^*\in H^i(X\times I, x_0\times I)$ is the image of $\alpha$ under the induced homomorphism for the projection $X\times I\to X$ and $\phi^*\in H^1(X\times I,X\times\partial I)$ is the image of $\phi$ under the induced homomorphism defined by the projection $X\times I\to I$.
Thus in view of an isomorphism $H_*(\Int B^n\setminus X)\to H_*(B^n\setminus X)$, the commutative diagram (2) stated for $B^n$ turns into 
the following ($2'$)
$$
\begin{CD}
H^{n-k-1}(X,\partial X) @>AD_n>> \tilde H_k(B^n\setminus X)\\
@V-\cup\phi^*VV @V\cong VV\\
H^{n-k}(X\times I,\partial X\times I\cup X\times\partial I) @ >AD_{n+1}>>\tilde H_k(B^{n+1}\setminus  (X\times I))
\end{CD}
$$
where $\partial X=X\cap\partial I^n$.

\subsection{Strongly infinite dimensional spaces}
First, we recall some classical definitions which are due to Alexandroff~\cite{AP}. 
A map $f:X\to I^n$ to the $n$-dimensional cube is called {\em essential} if it cannot be deformed to $\partial I^n$ through the maps of pairs
$(I^n,f^{-1}(\partial I^n))\to (I^n,\partial I^n)$. A map $f:X\to Q=\prod_{i=1}^\infty I$  to the Hilbert cube is called {\em essential} if the composition $p_n\circ f:X\to I^n$ is essential for every projection $p_n:Q\to I^n$ onto the factor.
A compact space $X$ is called {\em strongly infinite dimensional} if it admits an essential map onto the Hilbert cube. An infinite dimensional space
which is not strongly infinite dimensional is called {\em weakly infinite dimensional}. 

We call a relative cohomology class $\alpha\in H^k(X,A)$ {\em pure relative} if $j^*(\alpha)=0$ for the inclusion of pairs 
homomorphism $j^*:H^k(X,A)\to H^k(X)$. This means that $\alpha=\delta(\beta)$ for $\beta\in \tilde H^{k-1}(A)$. 
In the case when $k=1$ the cohomology class $\beta$ can be represented by a map $f':A\to S^0=\partial I$. Let $f:X\to I$ be a continuous extension of $f'$.
Then $\alpha=f^*(\phi)$ for 
the map $f:(X,A)\to (I,\partial I)$ of pairs where $\phi$ is  the fundamental class.
\begin{prop}\label{pure}
Suppose that for a compact metric space $X$ there  a sequence of pure relative classes $\alpha_i\in H^1(X,A_i)$, $i=1,2,\dots$,
$A_i\subset_{Cl}X$ such that the cup-product $\alpha_1\cup\alpha_n\cup\dots\cup\alpha_n\ne 0$ for all $n$.
Then $X$ is strongly infinite dimensional.
\end{prop}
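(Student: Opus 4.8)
The plan is to manufacture from data representing the classes $\alpha_i$ an essential map of $X$ onto the Hilbert cube. Since $\alpha_i\in H^1(X,A_i)$ is pure relative and of degree one, the discussion preceding the proposition already supplies a continuous map $f_i\colon X\to I$ --- a Tietze extension of a map $A_i\to S^0=\partial I$ that represents the class $\beta_i\in\tilde H^0(A_i)$ with $\alpha_i=\delta(\beta_i)$ --- such that $f_i\colon(X,A_i)\to(I,\partial I)$ is a map of pairs with $f_i^*(\phi)=\alpha_i$, where $\phi\in H^1(I,\partial I)$ is the fundamental class. I would then form $F=(f_1,f_2,\dots)\colon X\to Q=\prod_{i\ge 1}I$ and, for each $n$, the composition $F_n=p_n\circ F=(f_1,\dots,f_n)\colon X\to I^n$. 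Because $f_i(A_i)\subseteq\partial I$, the closed set $F_n^{-1}(\partial I^n)=\bigcup_{i\le n}f_i^{-1}(\partial I)$ contains $A_1\cup\dots\cup A_n$, and $F_n$ is a map of pairs $(X,F_n^{-1}(\partial I^n))\to(I^n,\partial I^n)$.

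The first substantial step is to compute $F_n^*(\phi_n)\in H^n(X,F_n^{-1}(\partial I^n))$, where $\phi_n\in H^n(I^n,\partial I^n)$ is the fundamental class. Viewing $(I^n,\partial I^n)$ as the $n$-fold product of pairs $(I,\partial I)$, so that $\phi_n=\phi\times\dots\times\phi$, and noting that $f_i$ is the composition of $F_n$ with the $i$-th coordinate projection, the relative form of the identity ``pullback of an exterior product $=$ cup product of pullbacks'' gives
$$F_n^*(\phi_n)=f_1^*(\phi)\cup\dots\cup f_n^*(\phi)\ \in\ H^n\bigl(X,\textstyle\bigcup_{i\le n}f_i^{-1}(\partial I)\bigr).$$
Restricting along the inclusion $A_1\cup\dots\cup A_n\subseteq\bigcup_{i\le n}f_i^{-1}(\partial I)$ and using that $f_i^*(\phi)$ is sent to $\alpha_i$ in $H^1(X,A_i)$, this class maps to $\alpha_1\cup\dots\cup\alpha_n\in H^n(X,A_1\cup\dots\cup A_n)$, which is nonzero by hypothesis; hence $F_n^*(\phi_n)\ne 0$. (For \v Cech cohomology of a compact metric space, the relative cup products and the naturality used here are obtained in the usual way, by replacing the closed sets $f_i^{-1}(\partial I)$ and $A_i$ by open neighborhoods and passing to the limit.)

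Next I would invoke the classical criterion of Alexandroff: a map $g\colon X\to I^n$ with $g^*(\phi_n)\ne 0$ in $H^n(X,g^{-1}(\partial I^n))$ is essential. Indeed, were $g$ inessential, a homotopy through maps of pairs $(X,g^{-1}(\partial I^n))\to(I^n,\partial I^n)$ would join $g$ to a map $g'$ with $g'(X)\subseteq\partial I^n$, whence $g^*(\phi_n)=(g')^*(\phi_n)$, and the latter vanishes because $g'$ factors through $(\partial I^n,\partial I^n)$ and $H^n(\partial I^n,\partial I^n)=0$. Applying this to each $F_n$ shows that $F_n$ is essential for all $n$, i.e.\ $F\colon X\to Q$ is essential. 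Finally, an essential map to $I^n$ must be onto --- a missed point $y$ would allow one to deform $g$ through maps of pairs into $\partial I^n$ (using a radial retraction $I^n\setminus\{y\}\to\partial I^n$ when $y\in\Int I^n$, and the contractibility of $I^n\setminus\{y\}$ rel $\partial I^n\setminus\{y\}$ when $y\in\partial I^n$) --- so each $F_n$ is surjective; since $F(X)$ is compact and $p_n(F(X))=I^n$ for all $n$, $F$ itself is onto $Q$. Thus $X$ admits an essential map onto $Q$, i.e.\ it is strongly infinite dimensional.

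I expect the main obstacle to be the bookkeeping in the second step: making the relative exterior-product/cup-product identity precise requires care with the product-of-pairs structure on $(I^n,\partial I^n)$, with the several relative groups $H^1(X,f_i^{-1}(\partial I))$, $H^n(X,\bigcup_i f_i^{-1}(\partial I))$ and $H^n(X,\bigcup_i A_i)$ together with the natural maps between them, and with the neighborhood arguments legitimizing cup products of \v Cech classes on closed subsets. The essentiality criterion and the surjectivity of essential maps onto cubes in the third step are classical, but worth stating explicitly since they are precisely what turns the cohomological hypothesis into the geometric conclusion.
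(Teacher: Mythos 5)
Your proposal is correct and follows essentially the same route as the paper: build the maps $f_i\colon(X,A_i)\to(I,\partial I)$ with $f_i^*(\phi)=\alpha_i$, observe that $\bar f_n=(f_1,\dots,f_n)$ pulls the product of fundamental classes back to $\alpha_1\cup\dots\cup\alpha_n\ne 0$, and conclude each $\bar f_n$, hence $f=(f_1,f_2,\dots)\colon X\to Q$, is essential. You merely spell out details the paper leaves implicit (the relative cross/cup identity, the criterion that a nonzero pullback of $\phi_n$ forces essentiality, and surjectivity onto $Q$), which is fine.
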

\begin{proof}
Let $f_i:(X,A_i)\to (I,\partial I)$ be representing
$\alpha_i$ maps. Note that the homomorphism $f_n^*:H^n(I^n,\partial I^n)\to H^n(X,\cup_{i=1}^nA_i)$ 
induced by the map  $\bar f_n=(f_1,f_2,\dots,f_n):X\to I^n$
takes the product of the fundamental classes
$\phi_1\cup\dots\cup\phi_n$ to $\alpha_1\cup\alpha_n\cup\dots\cup\alpha_n\ne 0$. Therefore, $f_n$ is essential. Thus,
the map $$f=(f_1,f_2,\dots):X\to\prod_{i=1}^\infty I=Q$$ is essential.
\end{proof}

We call a space $X$ {\em cohomologically strongly infinite dimensional} if there is a sequence of pure relative classes $\alpha_i\in H^{k_i}(X,A_i)$, $k_i>0$, $i=1,2,\dots$,
$A_i\subset_{Cl}X$ such that the cup-product $\alpha_1\cup\alpha_n\cup\dots\cup\alpha_n\ne 0$ for all $n$.
Compacta which are not cohomologically strongly infinite dimensional will be called  as {\em compacta with weak relative cohomology} or {\em relative cohomology weak compacta}.
One can take any coefficient ring $R$ to define relative cohomology weak compacta with respect to $R$.

Thus, the class of cohomologically weak compacta contains weakly infinite dimensional compacta 
(Proposition~\ref{weak}) as well as all compacta with finite cohomological dimension.

\subsection{Proof of the main theorem}
\begin{prop}\label{proj}
Let $X\subset Q$ be a closed subset and let $a\in H_i(Q\setminus X)$ be a nonzero element for some $i$. Then there exists an $n\in\mathbb{N}$ such that the image $a_n$ of $a$ under the inclusion homomorphism  $$H_i(Q\setminus X)\to H_i(Q\setminus p^{-1}_np_n(X))$$ is nontrivial. 
\end{prop}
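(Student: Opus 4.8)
The plan is to exploit the fact that the Hilbert cube is the inverse limit $Q=\varprojlim I^n$ with the bonding maps the coordinate projections $p_n:Q\to I^n$, so that $Q\setminus X$ is the increasing union of the open sets $U_n=Q\setminus p_n^{-1}p_n(X)$. Indeed, since $X$ is closed and $X=\bigcap_n p_n^{-1}p_n(X)$ (because a point not in $X$ is separated from $X$ by some finite coordinate, using compactness of $X$), we have $Q\setminus X=\bigcup_n U_n$ with $U_1\subset U_2\subset\cdots$.

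First I would verify the nesting $U_n\subset U_{n+1}$: if $q\notin p_{n+1}^{-1}p_{n+1}(X)$ then $p_{n+1}(q)\notin p_{n+1}(X)$, hence a fortiori $p_n(q)\notin p_n(X)$ since $p_n$ factors through $p_{n+1}$; thus $q\in U_n$ — wait, the inclusion goes the other way, so one checks instead that $p_n^{-1}p_n(X)\supset p_{n+1}^{-1}p_{n+1}(X)$, giving $U_n\subset U_{n+1}$, and that $\bigcap_n p_n^{-1}p_n(X)=X$. Next I would invoke the standard fact that singular homology commutes with direct limits of spaces along open inclusions (more precisely, $H_i(\bigcup_n U_n)=\varinjlim H_i(U_n)$), which holds because any singular cycle has compact image and a compact subset of $\bigcup U_n$ lies in some single $U_n$.

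Given the nonzero class $a\in H_i(Q\setminus X)$, represent it by a singular cycle $z$; its image is compact, hence contained in some $U_N$, so $a$ is the image of a class $a'\in H_i(U_N)$ under $H_i(U_N)\to H_i(Q\setminus X)$. Then for the conclusion I would argue by contradiction: if the image $a_n$ of $a$ in $H_i(Q\setminus p_n^{-1}p_n(X))=H_i(U_n)$ under the map induced by the \emph{inclusion} $Q\setminus X\hookrightarrow U_n$ were zero for every $n$, then in particular — here one must be careful about the direction of the maps. The map in the statement is $H_i(Q\setminus X)\to H_i(U_n)$ induced by inclusion $Q\setminus X\subset U_n$. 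Since these maps are compatible and $H_i(Q\setminus X)=\varinjlim H_i(U_n)$ with the structure maps being exactly the inclusions $U_n\hookrightarrow U_m$, the composite $H_i(U_N)\to H_i(Q\setminus X)\to H_i(U_n)$ for $n\ge N$ is just the inclusion-induced map $H_i(U_N)\to H_i(U_n)$. So $a$ being the image of $a'$ and $a_n$ being zero for all $n$ forces $a'$ to die in the direct limit, i.e. $a=0$, a contradiction.

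The main obstacle, and the point that needs the most care, is keeping the variance straight and pinning down the precise meaning of ``inclusion homomorphism'' in the statement together with the identification $H_i(\varinjlim U_n)\cong\varinjlim H_i(U_n)$; once the directed system $\{U_n\}$ with open inclusions and union $Q\setminus X$ is set up correctly, the rest is the routine compact-support argument for singular homology. I would also want to record explicitly the lemma that $\bigcap_n p_n^{-1}p_n(X)=X$ for closed (equivalently compact) $X\subset Q$, since that is what makes $\{U_n\}$ exhaust $Q\setminus X$; this uses that $Q$ is compact Hausdorff and the $p_n^{-1}p_n(X)$ form a decreasing family of closed sets with intersection contained in any neighborhood of $X$.
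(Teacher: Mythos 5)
Your approach is the same as the paper's: the entire published proof is the single observation that $H_i(Q\setminus X)=\varinjlim H_i(Q\setminus p_n^{-1}p_n(X))$, and your compact-supports argument (the sets $U_n=Q\setminus p_n^{-1}p_n(X)$ are open, increasing, and exhaust $Q\setminus X$ because $\bigcap_n p_n^{-1}p_n(X)=X$ for compact $X$) is exactly the right justification of that identity, spelled out in more detail than the paper gives. One correction to your write-up: there is no inclusion $Q\setminus X\subset U_n$ (rather $U_n\subset Q\setminus X$, since $X\subset p_n^{-1}p_n(X)$), so the ``inclusion homomorphism $H_i(Q\setminus X)\to H_i(U_n)$'' you try to take literally does not exist, and the composite $H_i(U_N)\to H_i(Q\setminus X)\to H_i(U_n)$ you invoke is a phantom; this imprecision is already present in the statement as printed, and the correct reading is that there exist $n$ and a class $a_n\in H_i(U_n)$ whose image under the genuine inclusion-induced map $H_i(U_n)\to H_i(Q\setminus X)$ is $a$. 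Your own argument delivers precisely this without the detour: a cycle representing $a$ has compact image, hence lies in some $U_N$, giving $a'\in H_i(U_N)$ mapping to $a$; for $k\ge N$ define $a_k$ as the image of $a'$ under $H_i(U_N)\to H_i(U_k)$, and each $a_k$ is nonzero because it maps onward to $a\ne 0$ in the colimit. Phrased this way (which is also how the proposition is used in the proof of the main theorem) your proof is complete and agrees with the paper's.
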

\begin{proof}
This follows from the fact that $$H_i(Q\setminus X)=\underset{\rightarrow}{\lim}H_i(Q\setminus p^{-1}_np_n(X)).$$
\end{proof}

\begin{thm}\label{main}
Let $X\subset Q$ be  a  compact subset of the Hilbert cube with weak relative cohomology
with coefficients in a ring with unit $R$. 
Then $\tilde H_i(Q\setminus X;R)=0$ for all $i$.
\end{thm}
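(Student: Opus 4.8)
The plan is to prove the contrapositive: if $\tilde H_i(Q\setminus X;R)\ne 0$ for some $i$, then $X$ is cohomologically strongly infinite dimensional over $R$, contradicting the hypothesis. Fix a nonzero $a\in\tilde H_i(Q\setminus X;R)$. By \propref{proj}, $a$ has a nonzero image at some stage, and since a class that is nonzero at one stage of a direct limit has nonzero image at all later stages, there is an $n_0$ — which we may take with $n_0-i-1\ge 1$ — such that for every $n\ge n_0$ the image $a_n$ of $a$ in $\tilde H_i(Q\setminus p_n^{-1}p_n(X);R)$ is nonzero. Writing $X_n=p_n(X)$ and using $Q\setminus p_n^{-1}(X_n)=(I^n\setminus X_n)\times\prod_{j>n}I$ together with the contractibility of the second factor, this says $a_n\ne0$ in $\tilde H_i(I^n\setminus X_n;R)$. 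Moreover $X_{n+1}\subseteq X_n\times I\subseteq I^{n+1}$, so $(I^n\setminus X_n)\times I=I^{n+1}\setminus(X_n\times I)\subseteq I^{n+1}\setminus X_{n+1}$, and $a_{n+1}$ is the image of $a_n$ under the corresponding inclusion of complements.

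Next I would transport this compatible system across Alexander duality and set up a recursion. For each $n\ge n_0$ the relative Alexander duality of Subsection 2.1 gives an isomorphism $AD\colon\check H^{n-i-1}(X_n,X_n\cap\partial I^n;R)\xrightarrow{\cong}\tilde H_i(I^n\setminus X_n;R)$ (when $X_n\cap\partial I^n=\emptyset$ this is the ordinary Alexander duality of the compactum $X_n\subseteq\mathrm{int}\,I^n$, valid in the positive degree $n-i-1$); put $\alpha_n:=AD^{-1}(a_n)\ne0$. Apply diagram (2$'$) with $X_n$ in place of $X$: the class $p^*\alpha_n\cup\phi^*\in\check H^{n-i}(X_n\times I,\partial X_n\times I\cup X_n\times\partial I;R)$ is carried by $AD$ to the image of $a_n$ in $\tilde H_i(I^{n+1}\setminus(X_n\times I);R)$. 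Now apply naturality of Alexander duality to the inclusion of pairs $(X_{n+1},X_{n+1}\cap\partial I^{n+1})\hookrightarrow(X_n\times I,(X_n\times I)\cap\partial I^{n+1})$ inside $I^{n+1}$: one checks $(X_{n+1}\times I)\cap\partial I^{n+1}$ restricts correctly, namely $X_{n+1}\cap\big(\partial X_n\times I\cup X_n\times\partial I\big)=X_{n+1}\cap\partial I^{n+1}$, so that restricting $p^*\alpha_n\cup\phi^*$ to $X_{n+1}$ computes $AD^{-1}(a_{n+1})$. Since $X_{n+1}\hookrightarrow X_n\times I$ is the projection $\mu_{n+1}\colon X_{n+1}\to X_n$ on one coordinate and the $(n+1)$st coordinate $r_{n+1}\colon X_{n+1}\to I$ on the other, this yields the recursion
$$\alpha_{n+1}=\mu_{n+1}^*(\alpha_n)\cup\rho_{n+1},\qquad \rho_{n+1}:=r_{n+1}^*(\phi)\in\check H^1\big(X_{n+1},r_{n+1}^{-1}(\partial I);R\big),$$
where $\mu_{n+1}$ is the bonding projection of the inverse system $X=\varprojlim X_n$ and $\rho_{n+1}$, being a pullback of the fundamental class, is a pure relative class of degree $1$.

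Finally I would pull everything back to $X$ via the limit projections $q_n\colon X\to X_n$. Set $\bar\alpha_n:=q_n^*(\alpha_n)$ and $\bar\rho_j:=q_j^*(\rho_j)=x_j^*(\phi)$, where $x_j\colon X\to I$ is the $j$th coordinate function; each $\bar\rho_j$ is pure relative of degree $1$. Since $q_n=\mu_{n+1}\circ q_{n+1}$, the recursion gives $\bar\alpha_{n+1}=\bar\alpha_n\cup\bar\rho_{n+1}$, hence $\bar\alpha_n=\bar\alpha_{n_0}\cup\bar\rho_{n_0+1}\cup\dots\cup\bar\rho_n$ for all $n\ge n_0$. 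The key nonvanishing claim is $\bar\alpha_n\ne0$ for every $n$. Indeed, by continuity of \v Cech cohomology one has $\check H^*\!\big(X,q_n^{-1}(\partial X_n);R\big)=\varinjlim_{m\ge n}\check H^*\!\big(X_m,b_{mn}^{-1}(\partial X_n);R\big)$ for the bonding maps $b_{mn}\colon X_m\to X_n$, so $\bar\alpha_n=0$ would force $b_{mn}^*(\alpha_n)=0$ for some $m$; but iterating the recursion shows $\alpha_m=b_{mn}^*(\alpha_n)\cup(\text{a product of pulled-back }\rho\text{'s})$, and $\alpha_m\ne0$, whence $b_{mn}^*(\alpha_n)\ne0$ for all $m\ge n$. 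Therefore $\bar\alpha_n\ne0$, and consequently $\bar\rho_{n_0+1}\cup\dots\cup\bar\rho_n\ne0$ for every $n$ (its cup product with $\bar\alpha_{n_0}$ equals $\bar\alpha_n\ne0$). Thus $\bar\rho_{n_0+1},\bar\rho_{n_0+2},\dots$ is a sequence of pure relative classes of positive degree on $X$ whose finite cup products are all nonzero, so $X$ is cohomologically strongly infinite dimensional, contradicting the hypothesis and finishing the proof. The main obstacle is the recursion step of the second paragraph: it requires carefully matching the relative subspaces on the two sides of Alexander duality, combining naturality of $AD$ with the suspension identity (2$'$), and dealing with the case $X_n\cap\partial I^n=\emptyset$; the nonvanishing of $\bar\alpha_n$, which hinges on continuity of \v Cech cohomology together with the cup-factor structure of $\alpha_m$, is the other point to be verified with care.
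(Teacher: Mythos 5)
Your proposal is correct and follows essentially the same route as the paper's own proof: reduce to finite stages via Proposition~\ref{proj}, dualize with the relative Alexander duality in $I^n$, use diagram ($2'$) and naturality to get the recursion $\beta_{k+1}=(q^{k+1}_k)^*(\beta_k)\cup\phi_{k+1}$, and pull back to $X$ using continuity of \v Cech cohomology to obtain an infinite sequence of pure relative $1$-classes with nonvanishing finite cup products, contradicting weak relative cohomology. The only differences (explicit treatment of the case $X_n\cap\partial I^n=\emptyset$, and proving nonvanishing of $q_n^*(\alpha_n)$ directly rather than via threads) are presentational.
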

\begin{proof}
We assume everywhere below that the coefficients are taken in $R$.

We note that $Q\setminus p^{-1}_np_n(X)=(I^n\setminus p_n(X))\times Q_n$ where $Q_n$ is the Hilbert cube complementary to $I^n$
in $Q=I^n\times Q_n$.

Assume the contrary, $\tilde H_i(Q\setminus X)\ne 0$.
Let $a\in \tilde H_i(Q\setminus X)$ be a nonzero element.
We use the notation $X_k=p_k(X)$. By Proposition~\ref{proj} there is $n$ such that $a_k\ne 0$, $a_k\in \tilde H_i((I^k\setminus X_k)\times Q_k)$ for all $k\ge n$.
Thus, we may assume that $a_k$ lives in $\tilde H_i((I^k\setminus X_k)\times\{0\})\cong \tilde H_i((I^k\setminus X_k)\times Q_k)$. 

Let $\beta_k\in H^{k-i-1}(X_k,X_k\cap\partial I^k)$ be  dual to $a_k$ for the Alexander duality applied in $(I^k,\partial I^k)$.
By the suspension isomorphism for the Alexander duality (see diagram ($2'$)), $a_k$ is dual in $I^{k+1}$ to $\beta_k^*\cup\phi^*$ where $\beta_k^*$ is the image of $\beta_k$ under the homomorphism induced by the projection $X_k\times I\to X_k$ and $\phi^*$ is the image of the fundamental class $\phi\in H^1(I,\partial I)$ under the homomorphism induced by the projection $X_k\times I\to I$. 

By the naturality of the Alexander duality
(we use the $B^n$-version of the diagram (1)) applied to the inclusion $X_{k+1}\subset X_k\times I$ we obtain that $a_{k+1}$ is dual to $\beta_k'\cup\phi_{k+1}$ where
$\beta_{k+1}'$ is the restriction of $\beta^*_k$ to $X_{k+1}$ and $\phi_{k+1}$ is the restriction of $\phi^*$ to $X_{k+1}$.
Note that $\beta'_{k+1}=(q^{k+1}_k)^*(\beta_k)$ where $q^{k+1}_k:X_{k+1}\to X_k$ is the projection $p^{k+1}_k:I^{k+1}\to I^k$ restricted to $X_{k+1}$. Thus,  $\beta_{k+1}=(q^{k+1}_k)^*(\beta_k)\cup\phi_{k+1}$.

By induction on $m$ we obtain
 $$(*)\ \ \ \ \ \beta_{n+m}=(q^{n+m}_n)^*(\beta_n)\cup\psi_1\cup\dots\cup\psi_m$$ 
where $\psi_j=(q^{n+m}_{n+j})^*(\phi_{n+j})$, $j=1,\dots,m$.

We define a sequence $\alpha_j=(q_{n+j})^*(\phi_{n+j})$ of relative cohomology classes on $X$ where $q_n=p_n|_X:X\to X_n$. 
Since $X=\underset{\leftarrow}{\lim} X_r$, for the \v Cech cohomology we have $$H^\ell(X,q^{-1}_{n+k}(A))=\underset{\rightarrow}\lim\{H^\ell(X_{n+m},(q^{n+m}_{n+k})^{-1}(A))\}_{m\ge k}$$ for any $A\subset_{Cl}X_{n+k}$. Hence for fixed $k$
the product $\alpha_1\cup\dots\cup\alpha_k$ is defined  by the thread $$\{(q^{n+m}_{n+1})^*(\phi_{n+1})\cup\dots\cup(q^{n+m}_{n+k})^*(\phi_{n+k})\}_{m\ge k}$$
in the above direct limit.
Since $\beta_{n+m}\ne 0$, in view of ($\ast$) we obtain that
$$(q^{n+m}_{n+1})^*(\phi_{n+1})\cup\dots\cup(q^{n+m}_{n+k})^*(\phi_{n+k})=\psi_1\cup\dots\cup\psi_k\ne 0.$$
Hence, $\alpha_1\cup\dots\cup\alpha_k\ne 0$ for all $k$. Therefore,  $X$ is cohomologically strongly infinite dimensional. This contradicts to the assumption.
\end{proof}
\begin{cor}
Let $X\subset Q$ be a compactum with finite cohomological dimension. Then $\tilde H_i(Q\setminus X)=0$ for all $i$.
\end{cor}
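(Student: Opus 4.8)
The plan is to deduce this directly from Theorem~\ref{main}. By that theorem it suffices to check that a compactum $X$ of finite cohomological dimension has weak relative cohomology over any ring $R$ with unit, i.e.\ that it is \emph{not} cohomologically strongly infinite dimensional. Suppose for contradiction that $X$ is cohomologically strongly infinite dimensional, so there is a sequence of pure relative classes $\alpha_i\in H^{k_i}(X,A_i)$ with $k_i>0$, $A_i\subset_{Cl}X$, and $\alpha_1\cup\dots\cup\alpha_n\ne 0$ for all $n$. Since each $\alpha_i$ is pure relative, it has positive degree $k_i\ge 1$; hence the product $\alpha_1\cup\dots\cup\alpha_n$ lives in $H^{m}(X,\bigcup_{i=1}^nA_i)$ with $m=\sum_{i=1}^n k_i\ge n$.

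First I would observe that the product of $n$ pure relative classes, being a nonzero element of a relative cohomology group in degree $\ge n$, forces the cohomological dimension of the pair $(X,\bigcup A_i)$, and hence of $X$ itself, to be at least $n$; letting $n\to\infty$ contradicts $\cd_R X<\infty$. To make the first step precise one needs that $H^m(X,B;R)\ne 0$ for a closed pair $(X,B)$ with $m>\cd_R X$ is impossible. This is exactly the statement that cohomological dimension of a compactum (with respect to a coefficient ring) does not increase for closed subpairs in the relevant range: from the long exact sequence of the pair, $H^m(X,B)$ sits between $H^{m-1}(B)$ and $H^m(X)$, and if $m>\cd_R X$ then $H^m(X)=0$, while the group $H^{m-1}(B)\to H^m(X,B)$ need not vanish, so one must instead argue that $\cd_R B\le\cd_R X$ and use $H^{m-1}(B)=0$ when $m-1\ge\cd_R X$, together with $H^m(X)=0$. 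The subadditivity $\cd_R(X,B)\le\max\{\cd_R X,\cd_R B+1\}\le \cd_R X+1$ combined with the degree bound $m\ge n$ then yields $n\le \cd_R X+1$ for all $n$, the desired contradiction.

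The only genuine point to verify is therefore the monotonicity $\cd_R B\le \cd_R X$ for a closed subset $B$ of a compactum $X$, which is standard for \v Cech cohomology (a closed subset of a compactum is an inverse limit of retracts of neighborhoods, or one invokes the known characterization of cohomological dimension via extension of maps into Eilenberg--MacLane spaces). I expect this monotonicity statement to be the main obstacle only in the sense of citing it correctly; once it is in hand, the argument is the short contradiction above. Alternatively, and perhaps more cleanly, one avoids the relative groups entirely by noting that if $\alpha_1\cup\dots\cup\alpha_n\ne 0$ then in particular the group $H^{\sum k_i}(X,\bigcup A_i;R)$ is nonzero with $\sum k_i\ge n$, and $\cd_R$ of any closed pair in $X$ is bounded by $\cd_R X+1$; this is all that is needed.
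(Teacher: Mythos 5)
Your proposal is correct, but it is more roundabout than what the paper intends. The paper gives no separate argument for this corollary: with its definition $\dim_{\Z}X=\sup\{n\mid \exists\, A\subset_{Cl}X\ \text{with}\ H^n(X,A)\ne 0\}$, the claim is immediate from Theorem~\ref{main} -- if $X$ were cohomologically strongly infinite dimensional, the nonzero product $\alpha_1\cup\dots\cup\alpha_n$ would be a nonzero element of $H^{m}(X,\bigcup_{i=1}^n A_i)$ with $\bigcup_{i=1}^n A_i$ closed and $m=\sum k_i\ge n$, so $\dim_{\Z}X\ge n$ for every $n$, contradicting finiteness; no further input is needed. Your version instead treats cohomological dimension as if it only controlled absolute groups of closed subsets, and therefore routes the contradiction through the long exact sequence of the pair, the monotonicity $\cd_R B\le\cd_R X$ for closed $B$, and the resulting bound $n\le \cd_R X+1$. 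That argument is valid (the monotonicity you rely on is indeed standard for \v Cech cohomology of compacta), and it buys independence from the particular formulation of the definition; what it costs is the importation of two standard facts that the paper's chosen definition renders unnecessary, plus a small imprecision in your intermediate step (you want $H^{m-1}(B)=0$ for $m-1>\cd_R B$, i.e. strict inequality, which your final bound $n\le\cd_R X+1$ does accommodate). In short: correct, same overall strategy (finite $\cd$ implies weak relative cohomology, then apply Theorem~\ref{main}), but the middle of your argument can be collapsed to one line once you quote the definition of cohomological dimension in terms of relative groups $H^n(X,A)$ with $A$ closed.
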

We recall that the cohomological dimension of a space $X$ with integers as the coefficients is defined as $$\dim_{\Z}X=\sup\{n\mid \exists\ A\subset_{Cl}X\ \text{with}\  H^n(X,A)\ne 0\}.$$

\begin{cor}\label{2}
Let $X\subset Q$ be  weakly infinite dimensional compactum. Then $\tilde H_i(Q\setminus X)=0$ for all $i$.
\end{cor}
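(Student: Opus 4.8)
The plan is to deduce Corollary~\ref{2} from Theorem~\ref{main}; the only thing that needs checking is that a weakly infinite dimensional compactum $X$ is a relative cohomology weak compactum. Granting this --- it was announced above as Proposition~\ref{weak} --- the corollary is immediate: Theorem~\ref{main} applied with $R=\Z$ gives $\tilde H_i(Q\setminus X;\Z)=0$ for all $i$, and since $Q\setminus X$ is an ordinary topological space the universal coefficient theorem then forces $\tilde H_i(Q\setminus X;R)=0$ for every ring $R$ with unit, in particular the asserted integral acyclicity. So the entire content lies in Proposition~\ref{weak}, and the rest of this sketch indicates how I would prove it.

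I would argue by contraposition: if $X$ is cohomologically strongly infinite dimensional, then $X$ is strongly infinite dimensional. (Note that a cohomologically strongly infinite dimensional space carries nonzero cohomology in arbitrarily high degrees, hence is infinite dimensional, so ``strongly infinite dimensional'' is the only option left once ``weakly infinite dimensional'' is excluded.) Let $\alpha_i\in H^{k_i}(X,A_i)$, $k_i\ge1$, be pure relative classes with $\alpha_1\cup\cdots\cup\alpha_n\ne0$ for all $n$, and write $\alpha_i=\delta(\beta_i)$ with $\beta_i\in\tilde H^{k_i-1}(A_i)$ (we may discard any $\alpha_i=0$, which would kill all later products). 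The scheme is to realize each $\alpha_i$ as $g_i^*(\phi_{k_i})$ for a map of pairs $g_i\colon(X,A_i)\to(I^{k_i},\partial I^{k_i})$, where $\phi_{k_i}$ is the fundamental class: pick a map $h_i\colon A_i\to\partial I^{k_i}=S^{k_i-1}$ with $h_i^*(\text{generator})=\beta_i$, extend it over $X$ (possible, $I^{k_i}$ being an absolute retract), and use naturality of the connecting homomorphism together with $\phi_{k_i}=\delta(\text{generator})$ to get $g_i^*(\phi_{k_i})|_{(X,A_i)}=\alpha_i$. Once the $g_i$ are in hand, the argument runs exactly as in the proof of Proposition~\ref{pure}, with $I^{k_i}$ in place of $I$: for each $n$ the composite $(g_1,\dots,g_n)\colon X\to I^{k_1}\times\cdots\times I^{k_n}=I^{k_1+\cdots+k_n}$ pulls the fundamental class of that cube back to $g_1^*(\phi_{k_1})\cup\cdots\cup g_n^*(\phi_{k_n})$, which restricts to $\alpha_1\cup\cdots\cup\alpha_n\ne0$ and is therefore nonzero, so this composite is essential; since a map to a cube that is essential remains essential under the projection onto the sub-cube spanned by any subset of the coordinates, the map $f=(g_1,g_2,\dots)\colon X\to\prod_i I^{k_i}\cong Q$ has every $p_n\circ f$ essential, i.e.\ $f$ is an essential map onto $Q$, and $X$ is strongly infinite dimensional.

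The step I expect to be the main obstacle is the realization of the coboundary data $\beta_i$: a class $\beta_i\in\tilde H^{k_i-1}(A_i)$ is in general not representable by a map into a sphere once $\dim A_i>k_i-1$ --- the generator $x$ of $H^2(\C P^2)$, with $x^2\ne0$ while the square of the fundamental class of $S^2$ vanishes, already shows this. Getting around it requires replacing the target cube $I^{k_i}$ by a suitable finite-dimensional polyhedral (or Moore-type) ``model'' carrying $\beta_i$, together with an essential-map criterion for maps into such a model, or else passing to the approximating polyhedra $X_m=p_m(X)\subset I^m$ of the proof of Theorem~\ref{main} and manufacturing degree-$1$ classes there in the manner of that proof; carrying this out, uniformly in the coefficient ring $R$, is the technical heart of Proposition~\ref{weak}. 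Given that proposition, Corollary~\ref{2} itself is the one-line deduction of the first paragraph.
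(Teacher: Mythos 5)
Your reduction of the corollary to ``Theorem~\ref{main} plus the statement that weakly infinite dimensional compacta have weak relative cohomology'' is sound (the universal coefficient detour is unnecessary: one just applies Theorem~\ref{main} with $R=\Z$, once Theorem~\ref{weak} is known over $\Z$). But your sketch of that key statement has a genuine gap, and it is exactly the one you flag yourself: a pure relative class $\alpha_i=\delta(\beta_i)\in H^{k_i}(X,A_i)$ with $k_i>1$ cannot in general be realized as $g_i^*(\phi_{k_i})$ for a map of pairs $g_i\colon (X,A_i)\to(I^{k_i},\partial I^{k_i})$, because in \v Cech cohomology $\beta_i\in\tilde H^{k_i-1}(A_i)$ classifies maps of $A_i$ into $K(R,k_i-1)$, not into $S^{k_i-1}$ (your $\C P^2$ example is precisely the point). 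Everything in your second paragraph after that step is conditional on this realization, so the contrapositive ``cohomologically strongly infinite dimensional $\Rightarrow$ strongly infinite dimensional'' is not established; the proposed workarounds are only gestured at, and the one you call a ``suitable model carrying $\beta_i$ together with an essential-map criterion'' is in fact the entire technical content of the paper's Theorem~\ref{weak}: there one represents $\alpha_i$ by a map of pairs into $(Cone(K(R,k_i-1)),K(R,k_i-1))$ (Proposition~\ref{rel}), composes with the cone of the constant map to get coordinates in $I$, and proves essentiality of the resulting map $X\to Q$ by showing that $\bar\phi_1\cup\dots\cup\bar\phi_n$ restricts to zero on $M=(q_1\times\dots\times q_n)^{-1}(\partial I^n)$, using the homeomorphism of $\prod Cone(K_i)\setminus\prod Ocone(K_i)$ with the join $K_1\ast\dots\ast K_n$ and a null-homotopy of $L^*$ there. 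None of that is routine, so it cannot be left as a remark.

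Note also that the paper's own proof of Corollary~\ref{2} bypasses this difficulty entirely: it does not invoke the general Theorem~\ref{weak}, but reruns the proof of Theorem~\ref{main} and observes that the pure relative classes produced there, $\alpha_j=(q_{n+j})^*(\phi_{n+j})$, are all \emph{one-dimensional}. In degree one your realization scheme is valid --- this is exactly Proposition~\ref{pure}: each $\alpha_j$ is induced by a map of pairs $(X,A_j)\to(I,\partial I)$, the nonvanishing of $\alpha_1\cup\dots\cup\alpha_k$ for all $k$ makes the resulting map $X\to Q$ essential, and strong infinite dimensionality contradicts the hypothesis. So to complete your argument you should either specialize to the degree-one classes coming out of the proof of Theorem~\ref{main} (the paper's route, which makes the corollary immediate), or actually carry out the Eilenberg--MacLane/join argument needed for Theorem~\ref{weak}.
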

\begin{proof}
This is rather a corollary of the proof of Theorem~\ref{main} where we constructed a nontrivial infinite product
of pure relative 1-dimensional cohomology classes $\alpha_1\cup\alpha_2\cup\dots .$ on $X$. Then the corollary holds true 
in view of Proposition~\ref{pure}. 
\end{proof}

\section{On weakly infinite dimensional compacta}

Let $\phi\in H^{m-1}(K(G,m-1);G)$ be the fundamental class. Let $\bar\phi$ denote the image $\delta(\phi)\in
H^m(Cone(K(G,k-1)),K(G,k-1);G)$ of the fundamental class under the connecting homomorphism in the exact sequence of  pair.
\begin{prop}\label{rel} 
For any pure relative cohomology class $\alpha\in H^m(X,A;G)$ there is a map of pairs $$f:(X,A)\to (Cone(K(G,m-1)),K(G,m-1))$$ such that $\alpha=f^*(\bar\phi)$.
\end{prop}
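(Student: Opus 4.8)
The plan is to produce the map $f$ in two stages, first on $A$ and then extending over $X$, mirroring the $k=1$ discussion that precedes Proposition \ref{pure}. Since $\alpha \in H^m(X,A;G)$ is pure relative, by definition $\alpha = \delta(\beta)$ for some $\beta \in \tilde H^{m-1}(A;G)$. The class $\beta$ is represented, up to the natural identification of \v Cech cohomology with homotopy classes of maps into an Eilenberg--MacLane space, by a map $g \colon A \to K(G,m-1)$ with $g^*(\phi) = \beta$; here I would note that $A$ is a compact metric space, so $\check H^{m-1}(A;G) \cong [A, K(G,m-1)]$ by the standard representability theorem for \v Cech cohomology of compacta (e.g. via the fact that $A$ is an inverse limit of compact polyhedra and $K(G,m-1)$ can be taken to be a CW complex). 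Composing $g$ with the inclusion $K(G,m-1) \hookrightarrow \mathrm{Cone}(K(G,m-1))$ gives a map on $A$ into the cone; since the cone is contractible, this composite is null-homotopic, hence extends to a map $f \colon X \to \mathrm{Cone}(K(G,m-1))$. By construction $f|_A = g$ after postcomposing with the cone inclusion, so $f$ is a map of pairs $(X,A) \to (\mathrm{Cone}(K(G,m-1)), K(G,m-1))$.

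It remains to check that $f^*(\bar\phi) = \alpha$. For this I would use naturality of the connecting homomorphism $\delta$ applied to the map of pairs $f$: there is a commutative square relating $\delta$ for the pair $(\mathrm{Cone}(K(G,m-1)), K(G,m-1))$ and $\delta$ for the pair $(X,A)$, fitting $f^*$ on the relative groups with $(f|_A)^* = g^*$ on the absolute groups of the subspaces. Then
$$
f^*(\bar\phi) = f^*(\delta(\phi)) = \delta(g^*(\phi)) = \delta(\beta) = \alpha,
$$
which is exactly the claim.

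The main obstacle is the identification of $\check H^{m-1}(A;G)$ with $[A, K(G,m-1)]$ and, more importantly, the passage from "$\beta$ is represented by a map into $K(G,m-1)$" to genuine naturality of $\delta$ at the \v Cech level: \v Cech cohomology is a limit of singular/CW cohomology of polyhedral approximations, so one must be slightly careful that the connecting homomorphism and the pullback $f^*$ commute with the inverse limits defining $\check H^*$ of $X$ and $A$. I would handle this by choosing the representing map $g$ on $A$ to factor (up to homotopy) through a polyhedron in an inverse system for $A$, observing that the cone of a CW complex is again a CW complex so all maps in sight are between CW complexes where the diagram commutes on the nose, and then passing to the limit. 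A second, minor point is well-definedness of $\bar\phi$: one should fix a CW model of $K(G,m-1)$ once and for all so that $\mathrm{Cone}(K(G,m-1))$ and the fundamental class $\phi$ are unambiguous, which the statement implicitly does.
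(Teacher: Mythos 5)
Your argument is essentially the paper's own proof: write $\alpha=\delta(\beta)$, represent $\beta$ by a map $g\colon A\to K(G,m-1)$, extend to $f\colon X\to Cone(K(G,m-1))$, and conclude by naturality of $\delta$. The one point to repair is the justification of the extension step: ``the composite is null-homotopic, hence extends to $X$'' is not a valid principle for an arbitrary compact metric pair $(X,A)$ (it needs the homotopy extension property, which a wild closed subset need not have, or good local properties of the target -- a contractible space that is not an ANR, such as the comb space, admits non-extendable maps from closed subsets); the correct reason, which is what the paper uses, is that $Cone(K(G,m-1))$ is an AR (an absolute extensor for metric spaces), so $g$ extends directly. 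Your remarks on representability of \v Cech cohomology of compacta and on naturality of $\delta$ at the \v Cech level are fine and only make explicit what the paper leaves implicit.
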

\begin{proof}
From the exact sequence of the pair $(X,A)$ it follows that $\alpha=\delta(\beta)$ for some $\beta\in H^{m-1}(A;G)$.
Let $g:A\to K(G,m-1)$ be a map representing $\beta$. Since $Cone(K(G,m-1))$ is an AR, there is an extension
$f:X\to Cone(K(G,m-1))$. Then the commutativity of the diagram formed by $f^*$ and the exact sequence of pairs
implies $\delta g^*(\phi)=f^*\delta(\phi)=f^*(\bar\phi)$.
\end{proof}

Corollary~\ref{2}  can be derived formally from the following:
\begin{thm}\label{weak}
Every weakly infinite dimensional compactum is relative cohomology weak for any coefficient ring $R$.
\end{thm}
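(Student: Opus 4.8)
The plan is to prove the contrapositive: assuming a compactum $X$ is cohomologically strongly infinite dimensional — so that there exist pure relative classes $\alpha_i \in H^{k_i}(X,A_i)$ with $k_i>0$ and $\alpha_1\cup\cdots\cup\alpha_n\ne 0$ for all $n$ — I would derive that $X$ is strongly infinite dimensional, i.e. it admits an essential map onto $Q$. The obvious first move is to represent each $\alpha_i$, via Proposition~\ref{rel}, by a map of pairs $f_i\colon(X,A_i)\to(Cone(K(G,k_i-1)),K(G,k_i-1))$ with $\alpha_i=f_i^*(\bar\phi_i)$. The difficulty is that the Eilenberg--MacLane targets are not finite cubes, so one cannot directly read off an essential map to $I^n$. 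The key geometric idea I would use is that for a compact metric space $X$ one may approximate: $X$ embeds in $Q$, and a map to $Cone(K(G,k-1))$ factors up to homotopy (after composing with a map $K(G,k-1)\to Q$ detecting a cohomology class) through a finite-dimensional compactum, or better, one replaces the abstract argument by a direct dimension-theoretic one.

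In fact the cleaner route, which I expect to be the intended one, is to argue directly with covering dimension. First I would show that if $X$ is cohomologically strongly infinite dimensional then for each $n$ there is a closed cover or a partition obstruction detecting nontrivial cohomology in arbitrarily high degrees \emph{compatibly}, and then invoke the classical characterization: a compactum is strongly infinite dimensional if and only if there exists an essential map onto $Q$, equivalently a sequence of pairs of disjoint closed sets $(C_i,C_i')$ with no finite subfamily having ``all partitions meeting.'' The pure relative class $\alpha_i=\delta(\beta_i)$ with $\beta_i\colon A_i\to S^0$ in the degree-one case gives exactly a partition-type obstruction; for higher $k_i$ one uses that a nonzero cup product of relative classes forces, by a standard argument with the Künneth/cross-product structure (as already exploited in diagram $(2')$ and in Proposition~\ref{pure}), a nonzero relative class in degree $\sum k_i$ of a product decomposition, and hence an essential map of $X$ onto $I^{\sum k_i}$ for every $n$ — whence onto $Q$. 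The main obstacle is bridging from ``cup product of relative classes in arbitrary positive degrees is nonzero'' to ``essential map onto a cube of growing dimension'': one must upgrade the degree-$k_i$ classes to degree-one data, and the tool for this is precisely the suspension/desuspension behavior of Alexander duality combined with the observation that an essential map $X\to I^N$ with $N\to\infty$ is all that is needed, not control on the individual $k_i$.

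Concretely, I would organize the proof as follows. Step 1: reduce to the case where $X\subset Q$, and fix the cohomologically strongly infinite dimensional data $\{\alpha_i\}$. Step 2: for each $m$, form the map $\bar f_m = (f_1,\dots,f_m)\colon X \to \prod_{i=1}^m Cone(K(G,k_i-1))$; using that each factor is contractible with the cone point, and that $\bar\phi_1\times\cdots\times\bar\phi_m\ne 0$ pulls back to $\alpha_1\cup\cdots\cup\alpha_m\ne 0$ in $H^{\sum k_i}(X,\bigcup A_i)$, conclude that the map of pairs $(X,\bigcup_{i=1}^m A_i)\to \big(\prod Cone(K(G,k_i-1)), \bigcup \text{(subcone walls)}\big)$ is essential in the appropriate sense. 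Step 3: replace each $Cone(K(G,k_i-1))$ by a finite polyhedral approximation (possible since we only need the obstruction through dimension $\sum_{i\le m}k_i$, and $X$ is a compact metric space, so maps factor through finite complexes up to the relevant skeleton), obtaining an essential map $X\to I^{N_m}$ with $N_m\to\infty$. Step 4: assemble these into an essential map $X\to Q$ by a diagonal/limit argument, using compactness of $X$ and that essentiality is detected on each finite projection. The serious technical point — and where I would spend the most care — is Step 3, the passage from Eilenberg--MacLane cones to cubes while preserving the nonvanishing of the top relative class; everything else is bookkeeping with the machinery already set up in Section~2.
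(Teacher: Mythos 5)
Your overall direction (contrapositive, representing the classes via Proposition~\ref{rel} by maps into cones over Eilenberg--MacLane spaces) matches the paper's starting point, but Steps 3--4 contain a genuine gap that the rest of the proposal does not repair. From a nonzero product $\alpha_1\cup\dots\cup\alpha_m\in H^{N_m}(X,\bigcup A_i;R)$ you can indeed conclude $\dim X\ge N_m$ and hence obtain, for each $m$ separately, \emph{some} essential map $X\to I^{N_m}$. But the existence of essential maps onto cubes of arbitrarily large dimension is equivalent only to $\dim X=\infty$, not to strong infinite-dimensionality: countably dimensional infinite-dimensional compacta admit essential maps onto every $I^N$ yet are weakly infinite dimensional. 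Strong infinite-dimensionality requires a \emph{single} map $X\to Q$ all of whose finite projections are essential, and essentiality is not preserved by the diagonal/limit assembly you propose in Step 4 --- if $h_1:X\to I^{N_1}$ and $h_2:X\to I^{N_2}$ are each essential, $(h_1,h_2):X\to I^{N_1+N_2}$ need not be. This is exactly the distinction the theorem is about, so the ``bookkeeping'' you defer to Step 4 is in fact the whole difficulty; the polyhedral-approximation Step 3 also does not address it, since it again only controls one finite stage at a time with no compatibility between stages.

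The paper closes this gap by building one coherent map from the start: it composes each $f_i:(X,A_i)\to(Cone(K_i),K_i)$ with the projection $q_i=Cone(c):Cone(K_i)\to[0,1]$ and takes $g=(q_1f_1,q_2f_2,\dots):X\to Q$, then proves every finite projection $g_n$ is essential. That step is not formal: assuming $g_n$ inessential, one lifts the deformation through the fibration $q_1\times\dots\times q_n$ over $\Int I^n$ to push $\bar f_n=(f_1,\dots,f_n)$ into $M=(q_1\times\dots\times q_n)^{-1}(\partial I^n)$, and then one must show that $\bar\phi_1\cup\dots\cup\bar\phi_n$ restricts to zero on $M$. This requires identifying $M=D\cup L$ with $D$ homeomorphic to the join $K_1\ast\dots\ast K_n$, writing $\bar\phi_1\cup\dots\cup\bar\phi_n=\delta\omega$ with $\omega$ on $D$ (contractibility of the product of cones), and proving that $L^*\hookrightarrow D$ is null-homotopic via the reduced join. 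None of this cohomological analysis of the ``walls'' appears in your outline, and without it (or some substitute producing a single essential map to $Q$) the argument only shows $X$ is infinite dimensional, which is strictly weaker than what Theorem~\ref{weak} needs.
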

\begin{proof}
Let $\alpha_i\in H^{k_i}(X,A_i;R)$ be a sequence of pure relative classes with a nontrivial infinite cup-product. 
We may assume that $k_i>1$. We denote $K_i=K(R,k_i)$.
By Proposition~\ref{rel} there are maps $f_i:(X,A_i)\to (Cone(K_i),K_i)$  representing
$\alpha_i$ in a sense that $f_i^*(\bar\phi_i)=\alpha_i$. Let $$q_i=Cone(c):Cone(K_i)\to Cone(pt)=[0,1]$$ be the cone over the constant map $c:K_i\to \{0\}$.
We show that $g=(q_1f_1,q_2f_2,\dots):X\to \prod_{i=1}^\infty I$ is essential. Assume that for some $n$ the map 
$$g_n=(q_1f_1,q_2f_2,\dots,q_nf_n):X\to \prod_{i=1}^nI=I^n$$ is inessential.
Then there is a deformation $H$ of $g_n$ to a map $h:X\to N_{\epsilon}(\partial I^n)$ rel $g^{-1}_n(N_{\epsilon}(\partial I^n))$
to the $\epsilon$-neighborhood of the boundary $\partial I^n$. Since the map $q_1\times\dots\times q_n$ is a fibration over $\Int I^n$,
there is a lift $\bar H$ of $H$ that deforms $$\bar f_n=(f_1,\dots, f_n):X\to \prod_{i=1}^n Cone(K_i)$$ to an $\epsilon$-neighborhood of
 $M=(q_1\times\dots\times q_n)^{-1}(\partial I^n)$. Since $M$ is an ANR we may assume that $\bar f_n$ can be deformed to $M$
rel $\bar f_n^{-1}(M)$.
Since $\alpha_1\cup\dots\cup\alpha_n=\bar f_n^*(\bar\phi_1\cup\dots\cup\bar\phi_n)$, to complete the proof it suffices to show that  
the restriction of $\bar\phi_1\cup\dots\cup\bar\phi_n$ to $M$ is zero.

We denote $$D=\prod_{i=1}^n Cone(K_i)\setminus \prod_{i=1}^n Ocone(K_i)$$ where
$Ocone(K)$ stands for the open cone.
Since $\prod Cone(K_i)$ is contractible, $\phi_1\cup\dots\cup\phi_n=\delta\omega$ for some $\omega\in H^{k_1+\dots+ k_n-1}(D;R)$.  

We show that $D$ is homeomorphic to $K_1\ast \dots\ast K_n$. Note that 
$$\prod_{i=1}^nCone(K_i)=\{t_1x_1+\dots+ t_nx_n\mid x_i\in K_i,\ t_i\in[0,1]\}$$
and
$$\prod_{i=1}^nOcone(K_i)=\{t_1x_1+\dots+ t_nx_n\mid x_i\in K_i,\ t_i\in[0,1)\}$$
with the convention that $0x=0x'$. Then the set $D$ consists of all $t_1x_1+\dots+ t_nx_n$ such that $t_i=1$ for some $i$. We recall that
$$
K_1\ast \dots\ast K_n=\{t_1x_1+\dots+ t_nx_n\mid x_i\in K_i,\ t_i\in[0,1]\ \sum t_i=1\}$$ with the same convention.
Clearly, the projection of of the unit sphere in $\R^n$ for the $\max\{|x_i|\}$ norm onto the unit sphere for the $|x_1|+\dots+|x_n|$ norm
is bijective.
Thus, the renormalizing map 
$\rho: D\to K_1\ast \dots\ast K_n$, $$\rho(t_1x_1+\dots+t_nx_n)=\frac{t_1}{\sum t_i}x_1+\dots+\frac{t_n}{\sum t_i}x_n$$ is bijective 
and, hence, is a homeomorphism.

The space $M$  can be defined as follows:
$$
M=\{t_1x_1+\dots+ t_nx_n\mid x_i\in K_i,\ t_i\in[0,1]\ \exists i\ : t_i\in{0,1}\}.
$$
Therefore, $M=D\cup L$ where
$$
L=\{t_1x_1+\dots+ t_nx_n\mid x_i\in K_i,\ t_i\in[0,1]\ \exists i\ : t_i=0\}.
$$
We note that $L$ is the cone over 
$$
L^*=\{t_1x_1+\dots+ t_nx_n\in K_1\ast \dots\ast K_n\mid  \exists i\ : t_i=0\}.
$$
Thus, the space $M$ is obtained by attaching to $D$ the cone over $L^*$. 

We show that the inclusion $L^*\to D$ is null-homotopic. Note that $$L^*=\bigcup_{i=1}^nK_1\ast\dots\ast \hat K_i\ast\dots\ast K_n$$ 
where $\hat K_i$ means that the $i$-factor is missing.
Fix points $e_i\in K_i$ and the straight-line deformation $$h_t^i:K_1\ast\dots\ast \hat K_i\ast\dots\ast K_n\to K_1\ast\dots\ast K_n$$ to the point $e_i$.
We recall that the reduced joint product is defined as follows
$$
K_1\tilde\ast\cdots\tilde\ast K_n=\{t_1x_1+\dots+ t_nx_n\mid x_i\in K_i,\ t_i\in[0,1],\ \sum t_i=1\}
$$
with convention that $0x=0x'$ and $e_i=e_j$. Let  $q:K_1\ast\dots\ast K_n\to K_1\tilde\ast\cdots\tilde\ast K_n$ be the
quotient map. Note that $q$ has contractible fibers and hence is a homotopy equivalence.
Then $\tilde x_0=q(\Delta^{n-1})$ is the base point in the reduced joint product   where the simplex $\Delta^{n-1}$ is spanned by $e_1,\dots, e_n$.

The deformation $h^i_t$ defines a deformation $\tilde h_t^i$ of  $K_1\tilde\ast\cdots\tilde\ast \hat K_i\tilde\ast\cdots\tilde\ast K_n$ to the base point 
$\sum t_ie_i$ in the reduced joint product $K_1\tilde\ast\cdots\tilde\ast K_n$. We note that for any $i<j$ the deformations $\tilde h_t^i$  and $\tilde h_t^j$ agree on the common part
$K_1\tilde\ast\cdots\tilde\ast \hat K_i\tilde\ast\cdots\tilde\ast \hat K_j\tilde\ast\cdots\tilde\ast K_n$. Therefore, the union $(\cup\tilde h^i_t)\circ q|_{L^*}$ is 
a well-defined  deformation
of $L^*$ to the base point in the reduced joint product where $q:K_1\ast\dots\ast K_n\to K_1\tilde\ast\cdots\tilde\ast K_n$ is quotient map.
Since $q$ is a homotopy equivalence, $L^*$ is null-homotopic in $K_1\ast\dots\ast K_n$.
\end{proof}
Since there exist strongly infinite dimensional compacta with finite cohomological dimension (see~\cite{D},\cite{DW}) the converse to Theorem~\ref{weak} does not hold true.

The following can be easily derived from from the definition of the cup product for the singular cohomology and the 
definition of the \v Cech cohomology.
\begin{prop}\label{product}
Let $A,A'\subset Y$, $Y,B,B'\subset X$ be closed subsets with $B\cap Y=A$ and $B'\cap Y=A'$.
Then for any ring $R$ there is a commutative diagram generated by inclusions and the cup product
$$
\begin{CD}
H^k(X,B;R)\times H^l(X,B';R) @>\cup>> H^{k+l}(X,B\cup B';R)\\
@VVV @VVV\\
H^k(Y,A;R)\times H^l(Y,A';R) @>\cup>> H^{k+l}(Y,A\cup A';R).
\end{CD}
$$
\end{prop}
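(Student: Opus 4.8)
The statement to prove is Proposition~\ref{product}: naturality of the relative cup product under restriction to a closed subspace, in \v Cech cohomology, under the hypotheses $B\cap Y=A$, $B'\cap Y=A'$.

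\medskip

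The plan is to first establish the statement for singular cohomology and then pass to \v Cech cohomology by a direct limit argument. For the singular case, I would recall the chain-level definition of the relative cup product. Given a space $X$ and closed (or merely arbitrary) subsets $B,B'$, one has the subcomplex $S_*(B)+S_*(B')\subset S_*(B\cup B')\subset S_*(X)$, and the Alexander--Whitney diagonal approximation $S_*(X)\to S_*(X)\otimes S_*(X)$ carries $S_*(B)+S_*(B')$ into $S_*(B)\otimes S_*(X)+S_*(X)\otimes S_*(B')$. Dualizing, a cocycle $u$ vanishing on $S_*(B)$ and a cocycle $v$ vanishing on $S_*(B')$ produce a cocycle $u\smile v$ vanishing on $S_*(B)+S_*(B')$, hence (since the inclusion $S_*(B)+S_*(B')\hookrightarrow S_*(B\cup B')$ is a quasi-isomorphism by excision/Mayer--Vietoris) representing a class in $H^{k+l}(X,B\cup B';R)$. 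The key point is that all of this is strictly natural with respect to the inclusion map $\iota:(Y,A,A')\hookrightarrow(X,B,B')$ of triads: $\iota$ induces a chain map commuting with Alexander--Whitney, and it sends $S_*(Y\cap B)=S_*(A)$ into $S_*(B)$, etc. Thus the square commutes already on the cochain level, and hence on cohomology. Here one uses precisely the hypotheses $B\cap Y=A$ and $B'\cap Y=A'$: they guarantee that restricting a cochain on $(X,B)$ to $Y$ lands in the subcomplex of cochains vanishing on $S_*(A)$, so the vertical maps in the diagram are well-defined.

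\medskip

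To pass from singular to \v Cech cohomology, I would use that $X$ and $Y$ are compact metric (this is the standing setting of the paper), so \v Cech cohomology of the pairs appearing is the direct limit of the singular cohomology of pairs of open neighborhoods, or of nerves of open covers. Concretely, write $X=\varprojlim X_r$ (or take a cofinal system of ANR neighborhoods / polyhedral pairs) so that $H^*_{\check C}(X,B;R)=\varinjlim H^*(P,Q;R)$ over polyhedral approximations with $B$ corresponding to $Q$; the cup product on \v Cech cohomology is defined as the direct limit of the simplicial cup products, and naturality for the inclusion $Y\hookrightarrow X$ is the direct limit of the naturality squares for the corresponding maps of polyhedral pairs, each of which is the already-established simplicial statement. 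Since direct limits are exact and commute with products of groups here, the limiting square commutes. Alternatively, and more cheaply, one can simply quote that for compact pairs \v Cech cohomology with the cup product is a multiplicative cohomology theory on the category of compact pairs which is natural for maps of pairs, and Proposition~\ref{product} is the instance of that naturality applied to the inclusion of triads.

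\medskip

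The main obstacle, and the only place any care is needed, is bookkeeping: making sure the hypotheses $B\cap Y=A$, $B'\cap Y=A'$ are used correctly so that the two vertical restriction maps and the bottom horizontal cup product are all defined on the nose, and that in the \v Cech limit one can choose a \emph{compatible} cofinal system of neighborhoods for the triad $(X;B,B')$ and for the triad $(Y;A,A')$ simultaneously — i.e. neighborhoods $U\supset B$, $U'\supset B'$ in $X$ with $U\cap Y$, $U'\cap Y$ cofinal among neighborhoods of $A$, $A'$ in $Y$. This is routine for compact metric spaces (intersect a neighborhood basis in $X$ with $Y$), so the proof is indeed, as the paper says, an easy consequence of the definitions; I would not belabor it.
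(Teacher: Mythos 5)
Your proposal is correct and follows essentially the route the paper intends: the paper offers no detailed argument, remarking only that the statement "can be easily derived from the definition of the cup product for the singular cohomology and the definition of the \v Cech cohomology," and your write-up simply fills in that outline (chain-level naturality of the Alexander--Whitney cup product under the inclusion of triads, then passage to the \v Cech limit over compatible neighborhood systems). Your parenthetical care about the excisive-couple issue for $S_*(B)+S_*(B')$ is exactly the right point to flag, and it disappears in the \v Cech limit where one works with open (or polyhedral) neighborhoods, so nothing further is needed.
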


\begin{thm}
Let $Y$ be a closed subset of a compactum $X$ with a weak relative cohomology over a ring $R$. Then $Y$ has 
a weak relative cohomology over $R$.
\end{thm}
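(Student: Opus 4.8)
The plan is to argue by contraposition: I will show that if $Y$ is cohomologically strongly infinite dimensional, then so is $X$. Assume therefore that there is a sequence of pure relative classes $\alpha_i\in H^{k_i}(Y,A_i;R)$ with $A_i\subset_{Cl}Y$, $k_i>0$, all of whose finite cup-products $\alpha_1\cup\dots\cup\alpha_n$ are nonzero. Since $Y$ is closed in $X$, each $A_i$ is closed in $X$ as well. Because $\alpha_i$ is pure relative, exactness of the cohomology sequence of the pair $(Y,A_i)$ lets me write $\alpha_i=\delta_Y(\beta_i)$ for some $\beta_i\in\tilde H^{k_i-1}(A_i;R)$, where $\delta_Y$ denotes the connecting homomorphism of $(Y,A_i)$.

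The main idea is to re-glue the \emph{same} classes $\beta_i$ on $A_i$, but now over $X$ instead of over $Y$. So I set $\gamma_i:=\delta_X(\beta_i)\in H^{k_i}(X,A_i;R)$, where $\delta_X$ is the connecting homomorphism of the pair $(X,A_i)$. Then $\gamma_i$ is pure relative, since it lies in the image of $\delta_X$, which by exactness is the kernel of $H^{k_i}(X,A_i;R)\to H^{k_i}(X;R)$. Next, naturality of the long exact sequence of a pair, applied to the inclusion of pairs $(Y,A_i)\hookrightarrow(X,A_i)$ (which is the identity on $A_i$), gives that the restriction homomorphism $H^{k_i}(X,A_i;R)\to H^{k_i}(Y,A_i;R)$ carries $\gamma_i=\delta_X(\beta_i)$ to $\delta_Y(\beta_i)=\alpha_i$. (Alternatively one could build such $\gamma_i$ by extending a classifying map for $\beta_i$ over $X$ via \propref{rel}, but the connecting-homomorphism description is more direct.)

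It then remains to see that $\gamma_1\cup\dots\cup\gamma_n\in H^{k_1+\dots+k_n}(X,A_1\cup\dots\cup A_n;R)$ is nonzero for every $n$, and here \propref{product} is exactly the tool. Each $A_i$ — and hence each partial union $A_1\cup\dots\cup A_j$ — is a closed subset of $Y$, so I may apply \propref{product} with $Y$ as the subspace, $B=A_1\cup\dots\cup A_j$ and $B'=A_{j+1}$, noting $B\cap Y=B$ and $B'\cap Y=B'$. Iterating this over $j$ (using associativity of the cup product) shows that the restriction homomorphism $H^{k_1+\dots+k_n}(X,A_1\cup\dots\cup A_n;R)\to H^{k_1+\dots+k_n}(Y,A_1\cup\dots\cup A_n;R)$ sends $\gamma_1\cup\dots\cup\gamma_n$ to $\alpha_1\cup\dots\cup\alpha_n\ne0$. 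Hence $\gamma_1\cup\dots\cup\gamma_n\ne0$ for all $n$, so the pure relative classes $\gamma_i\in H^{k_i}(X,A_i;R)$ witness that $X$ is cohomologically strongly infinite dimensional. This is the contrapositive of the statement, so the proof is complete.

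I do not anticipate a genuine obstacle: the argument is a diagram chase resting on \propref{product} together with naturality of the connecting homomorphism of a pair. The two points that need a moment's care are (i) that each $A_i$ is genuinely closed in $X$ — so that $(X,A_i)$ is an admissible pair for \v Cech cohomology and $\delta_X$ makes sense — which holds because $A_i\subset_{Cl}Y\subset_{Cl}X$; and (ii) the bookkeeping in the iterated use of \propref{product}, which goes through because each partial union $A_1\cup\dots\cup A_j$ is again a closed subset of $Y$.
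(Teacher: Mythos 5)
Your argument is correct, and it reaches the conclusion by a genuinely more direct route than the paper. Both proofs argue by contraposition, assume pure relative classes $\alpha_i\in H^{k_i}(Y,A_i;R)$ with all finite products nonzero, produce classes on $X$ restricting to the $\alpha_i$, and then conclude via \propref{product} and induction that their products on $X$ are nonzero. The difference is in the lifting step: the paper invokes \propref{rel} to represent $\alpha_i$ by a map of pairs $f_i:(Y,A_i)\to (Cone(K(R,k_i-1)),K(R,k_i-1))$, extends it over $X$ using that the cone is an AR, and sets $\beta_i=g_i^*(\bar\phi_i)\in H^{k_i}(X,B_i)$ with $B_i=g_i^{-1}(K(R,k_i-1))$ a possibly larger closed set than $A_i$; you instead take the same class $\beta_i\in \tilde H^{k_i-1}(A_i;R)$ with $\delta_Y(\beta_i)=\alpha_i$ and define $\gamma_i=\delta_X(\beta_i)\in H^{k_i}(X,A_i;R)$, using naturality of the connecting homomorphism for $(Y,A_i)\hookrightarrow (X,A_i)$ to see that $\gamma_i$ restricts to $\alpha_i$. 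Your version avoids the classifying-map and AR-extension machinery entirely, keeps the relative sets equal to the original $A_i$ (so the hypothesis $B\cap Y=A$ of \propref{product} is satisfied on the nose, whereas in the paper's construction $B_i\cap Y$ need not equal $A_i$ and one works with the inclusion $(Y,A_i)\to(X,B_i)$), and makes the purity of the lifted classes immediate from exactness. The paper's approach has the merit of being uniform with the techniques used elsewhere in the article (notably the proof of Theorem~\ref{weak}), but as a proof of this particular theorem your connecting-homomorphism argument is cleaner and equally rigorous.
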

\begin{proof}
Assume that $Y$ is cohomologically strongly infinite dimensional. Let $\alpha_i\in H^{k_i}(Y,A_i;R)$ be pure relative cohomology classes with nonzero product. Let $$f_i:(Y,A_i)\to (Cone(K(R,k_i-1)),K(R,k_i-1))$$ be a map 
from Proposition~\ref{rel} representing $\alpha_i$. Let $$g_i:X\to Cone(K(R,k_i-1))$$ be an extension of $f_i$
and let $B_i=g_i^{-1}(K(R,k_i-1))$. We consider $\beta_i=g_i^*(\bar\phi_i)\in H^{k_i}(X,B_i)$. Note that
$\alpha_i=\xi_i^*(\beta_i)$ where $\xi_i:(Y,A_i)\to (X,B_i)$ is the inclusion.
By induction, Proposition~\ref{product}, and the fact $\alpha_i\cup\dots\cup\alpha_n\ne 0$ 
we obtain that $\beta_1\cup\dots\cup\beta_n\ne 0$ for all $n$. Then we can conclude that $X$ is cohomologically strongly infinite dimensional.
\end{proof}

\section{Applications}
Let $\Sigma$ denote the linear span of the standard Hilbert cube $Q$ in the Hilbert space $\ell_2$ and let $\Sigma^\omega$ denote the product
of countably many copies of $\Sigma$.
The following is well-known:
\begin{prop}\label{known}
Let $H$ be either $\ell_2$ or $\Sigma^\omega$. Then for any closed subset $A\subset Q$ of the Hilbert cube any embedding $\phi:A\to H$ can be extended to an embedding
$\bar\phi:Q\to H$.
\end{prop}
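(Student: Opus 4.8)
\textbf{Proof proposal for Proposition~\ref{known}.}

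The plan is to recall the classical Anderson--West--Chapman theory of absorbing sets and apply the extension principle for $\mathcal{Z}$-sets. First I would observe that a closed subset $A$ of the Hilbert cube $Q$ is a compactum, and any compactum embeds in $Q$; since both $\ell_2$ and $\Sigma^\omega$ contain a copy of $Q$, we have at least one embedding $A\hookrightarrow H$ to start from, though we must produce one \emph{extending the given} $\phi$. The key structural fact is that $H=\ell_2$ (respectively $H=\Sigma^\omega$) is a \emph{$\sigma$Z-absorbing} space for which every compact subset is a (strong) $\mathcal{Z}$-set; more precisely, $\ell_2$ is homeomorphic to $\Sigma^\omega$ by the Anderson--Kadec-type theorem, and in such spaces there is a homeomorphism extension theorem: any homeomorphism between $\mathcal{Z}$-sets of $H$ extends to a homeomorphism of $H$.

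The steps I would carry out, in order, are as follows. Step one: fix \emph{some} embedding $e:Q\to H$ onto a $\mathcal{Z}$-set $e(Q)$ in $H$; this exists because $Q$ is compact and compacta sit as $\mathcal{Z}$-sets in these model spaces. Step two: note that $\phi(A)\subset H$ and $e(A)\subset H$ are both compacta, hence $\mathcal{Z}$-sets in $H$, and the composite $\phi\circ (e|_A)^{-1}:e(A)\to\phi(A)$ is a homeomorphism between two $\mathcal{Z}$-sets of $H$. Step three: invoke the $\mathcal{Z}$-set unknotting (homeomorphism extension) theorem for $H$ to obtain a homeomorphism $\Phi:H\to H$ with $\Phi|_{e(A)}=\phi\circ(e|_A)^{-1}$. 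Step four: set $\bar\phi=\Phi\circ e:Q\to H$; this is an embedding (a composite of an embedding and a homeomorphism), and for $x\in A$ we get $\bar\phi(x)=\Phi(e(x))=\phi(x)$, so $\bar\phi$ extends $\phi$.

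The main obstacle is purely bookkeeping about which ambient spaces admit the $\mathcal{Z}$-set unknotting theorem and the fact that all their compacta are $\mathcal{Z}$-sets: for $\ell_2$ this is Anderson's theorem together with Klee-type unknotting, and for $\Sigma^\omega$ one uses that $\Sigma^\omega$ is homeomorphic to $\ell_2$ (both being separable, completely metrizable, and $\sigma$-compact-absorbing in the appropriate sense — in fact $\Sigma^\omega\cong\ell_2$). Once that identification is in place, the argument is formal. I would therefore spend the bulk of the write-up citing the precise form of the extension theorem (e.g.\ from Bessaga--Pe\l czy\'nski or Chapman) and verifying that $\phi(A)$ and $e(A)$ genuinely are $\mathcal{Z}$-sets, which for compacta in $\ell_2$ or $\Sigma^\omega$ is standard; the rest is the four-line composition above.
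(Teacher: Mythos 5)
The paper itself offers no proof of this proposition --- it is quoted as a well-known fact --- so your argument should be judged against the standard one, and its skeleton (fix an embedding $e:Q\to H$, note that $e(A)$ and $\phi(A)$ are compacta and hence $\mathcal{Z}$-sets in $H$, extend the homeomorphism $\phi\circ(e|_A)^{-1}$ to an ambient homeomorphism $\Phi$ of $H$, and set $\bar\phi=\Phi\circ e$) is exactly the standard route and is correct for $H=\ell_2$, where Anderson's homeomorphism extension theorem for compacta in $s\cong\ell_2$ applies verbatim.

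There is, however, one genuine error in your justification of the case $H=\Sigma^\omega$: it is \emph{not} true that $\Sigma^\omega$ is homeomorphic to $\ell_2$. The space $\Sigma$ is $\sigma$-compact and meager in itself, hence so is $\Sigma^\omega$ (it is covered by the closed nowhere dense sets $K\times\Sigma\times\Sigma\times\cdots$ with $K$ compact in $\Sigma$), so $\Sigma^\omega$ is not completely metrizable, whereas $\ell_2$ is Polish; this is precisely why the Banakh--Belegradek theorem quoted immediately after the proposition distinguishes the two model spaces. Consequently you cannot transfer the $\mathcal{Z}$-set unknotting theorem from $\ell_2$ to $\Sigma^\omega$ by a homeomorphism. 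The repair is to invoke the theory of absorbing spaces directly: $\Sigma^\omega$ is the standard absorbing space for the absolute Borel class $F_{\sigma\delta}$ (Bessaga--Pe{\l}czy\'nski; Banakh--Radul--Zarichnyi), in which every compactum is a $\mathcal{Z}$-set and homeomorphisms between $\mathcal{Z}$-sets which are homotopic to the inclusion extend to ambient homeomorphisms --- and the homotopy hypothesis is automatic here because $\Sigma^\omega$ is a convex subset of a linear metric space, so any two maps of a compactum into it are homotopic by the straight-line homotopy. (Alternatively, and slightly more economically, one can bypass unknotting altogether via strong universality: extend $\phi$ to a map $Q\to H$ using that $H$ is convex, hence an AR, and then approximate this map rel $A$ by a $\mathcal{Z}$-embedding, which strong $\mathcal{C}$-universality of both model spaces permits since $Q$ lies in the relevant class in either case.) With that citation corrected, your four-step composition argument goes through.
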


Let ${\mathcal R}^\gamma(M)$ denote the space of all Riemannian metrics on a  manifold $M$ with the $C^\gamma$ topology.
For the definition of such topology for finite $\gamma$ which are not integers we refer to~\cite{BB}. The following theorem was proved in~\cite{BB}:
\begin{thm}
For a connected manifold $M$ the space ${\mathcal R}^\gamma(M)$ is homeomorphic to $\Sigma^\omega$ when $\gamma<\infty$ and
${\mathcal R}^\infty(M)$ is homeomorphic to $\ell_2$.

The space ${\mathcal R}^\gamma_{\ge 0}(\mathbb R^2)$ is homeomorphic to $\Sigma^\omega$ when $\gamma<\infty$ and it is not an integer.
The space ${\mathcal R}_{\ge 0}^\infty(\mathbb R^2)$ is homeomorphic to $\ell_2$.
\end{thm}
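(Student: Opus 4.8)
The theorem is quoted from \cite{BB}, so I will only describe the strategy one would follow to prove it; it is a standard application of the theory of infinite-dimensional absorbing sets. The plan is, in each of the four cases, to realize the space of metrics as a separable metrizable absolute retract, to compute its descriptive complexity, and then to invoke a characterization theorem that identifies it with a standard model. The first step is soft. For $\mathcal R^\gamma(M)$ the set of positive-definite $C^\gamma$ symmetric $(0,2)$-tensor fields is convex inside the topological vector space $\Gamma^\gamma(S^2T^*M)$ of all such fields (a convex combination of positive-definite forms is positive-definite), hence an absolute retract. For $\mathcal R^\gamma_{\ge 0}(\mathbb R^2)$, which is not convex, one instead imports the structural analysis of Belegradek and Hu \cite{BH} to conclude that this space is an absolute retract as well. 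Separability and metrizability are built into the choice of $C^\gamma$ topology.

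In the case $\gamma=\infty$ the ambient space $\Gamma^\infty(S^2T^*M)$, carried with the compact-open $C^\infty$ topology, is a separable infinite-dimensional Fréchet space, hence homeomorphic to $\ell_2$ by the Anderson--Kadec theorem. Positive-definiteness over each piece of a compact exhaustion of $M$ is an open condition, so $\mathcal R^\infty(M)$ is a $G_\delta$ in $\Gamma^\infty(S^2T^*M)$, hence completely metrizable; being convex it is an absolute retract. One then checks that it has the discrete approximation property, so by Toru\'nczyk's characterization of $\ell_2$ it is homeomorphic to $\ell_2$. The same argument, now fed by the structural results of \cite{BH}, handles $\mathcal R^\infty_{\ge 0}(\mathbb R^2)$.

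In the case $\gamma<\infty$ (with $\gamma\notin\mathbb Z$, which is the range in which the answer is $\Sigma^\omega$) the $C^\gamma$ topology is deliberately incomplete: the defining Hölder bounds are $F_\sigma$ conditions, and imposing them locally over an exhaustion of the manifold makes $\mathcal R^\gamma(M)$ an $F_{\sigma\delta}$, i.e.\ $\mathbf{\Pi}^0_3$, subset of a Polish space that is neither $\sigma$-compact nor completely metrizable, and one verifies it is genuinely of that Borel class. One then checks the three hypotheses of the Bestvina--Mogilski characterization of the $\mathbf{\Pi}^0_3$-absorbing absolute retract: (i) it is an absolute retract (done above); (ii) it is $\mathbf{\Pi}^0_3$ and of no lower Borel class (the complexity computation just indicated); and (iii) it is \emph{strongly $\mathbf{\Pi}^0_3$-universal}, meaning that every map of a $\mathbf{\Pi}^0_3$-space into it can be approximated by closed embeddings that agree with the given map on a prescribed closed subset. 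Granting (i)--(iii), the Bestvina--Mogilski uniqueness theorem identifies $\mathcal R^\gamma(M)$ with the standard model, which is exactly $\Sigma^\omega$, the countable power of the cap-set $\Sigma$, the linear span of $Q$.

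I expect the only genuinely hard input to be (iii), strong universality, together with its $\gamma=\infty$ counterpart, the discrete approximation property; everything else is either formal or a bookkeeping complexity calculation. I would prove it by the standard ``soaking'' technique: given a map $f\colon Z\to\mathcal R^\gamma(M)$ from a $\mathbf{\Pi}^0_3$-space $Z$, one perturbs $f$ by adding bump tensor fields supported in a locally finite family of coordinate charts, arranging that the perturbed map is a closed embedding whose image meets each ``coordinate cube'' in a copy of the model absorbing set while still agreeing with $f$ on the prescribed closed subset. The room for these perturbations comes from having infinitely many independent directions to deform in: unboundedly many disjoint coordinate charts when $M$ is non-compact, and otherwise the scale of Hölder exponents $\gamma'<\gamma$. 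Checking that such perturbations can be made $C^\gamma$-small and that the resulting maps are indeed closed embeddings is the technical heart of \cite{BB}, and it is precisely this step that separates the complete case (producing $\ell_2$) from the incomplete one (producing $\Sigma^\omega$).
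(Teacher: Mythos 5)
The paper does not prove this statement at all: it is imported verbatim from \cite{BB} (``The following theorem was proved in \cite{BB}''), so there is no proof of record here to compare your outline against, and deferring the hard work to \cite{BB} is exactly what the authors themselves do. Your sketch --- absolute retract via convexity, Anderson--Kadec/Toru\'nczyk for the completely metrizable case, and the Bestvina--Mogilski theory of $\mathbf{\Pi}^0_3$-absorbing sets with strong universality as the technical heart for the incomplete case --- is indeed the standard machinery behind results of this type and is consistent with the approach of \cite{BB}.

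Two inaccuracies are worth flagging, though neither is fatal to the strategy. First, you misplace the non-integer hypothesis: the theorem asserts $\mathcal R^\gamma(M)\cong\Sigma^\omega$ for \emph{every} finite $\gamma$, and the restriction $\gamma\notin\Z$ enters only for $\mathcal R^\gamma_{\ge 0}(\R^2)$; your parenthetical narrowing of the first case to non-integer $\gamma$ misreads the claim. Second, your complexity computation locates the failure of complete metrizability in ``H\"older bounds being $F_\sigma$ conditions,'' but that is not the mechanism: the points of $\mathcal R^\gamma(M)$ are smooth Riemannian metrics (as in the Belegradek--Hu setting that motivates the paper), and it is the smoothness condition, viewed inside the space of $C^\gamma$ tensor fields with the coarser $C^\gamma$ topology, that is of Borel class $F_{\sigma\delta}$ and destroys Polishness --- this is why the answer is $\Sigma^\omega$ already for integer finite $\gamma$. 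As written, your Borel-class bookkeeping would not go through, although the absorbing-set framework you invoke is the right one.
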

This theorem with our main result implies the following
\begin{cor}
Let $X\subset {\mathcal R}^\gamma(M)$ be a closed weakly infinite dimensional subset
for a connected manifold $M$. Then ${\mathcal R}^\gamma(M)\setminus X$ is acyclic.

Let $X\subset {\mathcal R}_{\ge 0}^\gamma(\R^2)$ be a closed weakly infinite dimensional subset
when $\gamma=\infty$ or $\gamma<\infty$ and it is not an integer.
Then the complement ${\mathcal R}_{\ge 0}^\gamma(\R^2)\setminus X$ is acyclic.
\end{cor}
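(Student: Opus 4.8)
The plan is to reduce both assertions to \theoref{main} applied inside a suitable Hilbert cube. By the theorem quoted above from \cite{BB}, the ambient space $H$ in question --- namely ${\mathcal R}^\gamma(M)$, respectively ${\mathcal R}_{\ge 0}^\gamma(\R^2)$ in the stated range of $\gamma$ --- is homeomorphic either to $\ell_2$ or to $\Sigma^\omega$. Since acyclicity of the complement is a topological invariant of the pair $(H,X)$, it suffices to prove: \emph{if $H$ is $\ell_2$ or $\Sigma^\omega$ and $X\subset H$ is closed and weakly infinite dimensional, then $\tilde H_i(H\setminus X)=0$ for every $i$.}

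So fix a nonzero class $a\in\tilde H_i(H\setminus X)$ and represent it by a singular cycle $z$; its support $K=|z|$ is a compactum lying in $H\setminus X$, and it is enough to show that $z$ bounds in $H\setminus X$. The geometric heart of the argument is to trap $K$ inside a topological Hilbert cube contained in $H$. To do this, embed the compactum $K$ as a closed subset of the standard Hilbert cube $Q$; the inverse of this embedding is an embedding into $H$ of a closed subset of $Q$, so by \propref{known} it extends to an embedding $\bar\phi\colon Q\to H$. Put $Q'=\bar\phi(Q)$: a Hilbert cube in $H$ with $K\subset Q'$, and $K\cap X=\emptyset$ gives $K\subset Q'\setminus X$.

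Now $Q'\cap X$ is closed in the compactum $Q'$, hence is itself a compactum, and it is a closed subspace of $X$; consequently it is either finite dimensional or weakly infinite dimensional, and in either case it has weak relative cohomology over $\Z$ --- by \theoref{weak} in the weakly infinite dimensional case, and because it has finite cohomological dimension in the other. Hence \theoref{main} (equivalently \corref{2} together with the corollary on finite cohomological dimension) yields $\tilde H_*\big(Q'\setminus(Q'\cap X)\big)=\tilde H_*(Q'\setminus X)=0$. Since $K\subset Q'\setminus X\subset H\setminus X$, the image of $[z]$ in $\tilde H_i(Q'\setminus X)$ is zero, i.e.\ $z=\partial w$ for a singular chain $w$ supported in $Q'\setminus X$, hence in $H\setminus X$; therefore $a=0$, contrary to assumption. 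The same argument applies verbatim to ${\mathcal R}_{\ge 0}^\gamma(\R^2)$.

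The only input that is not formal bookkeeping is \propref{known}, which supplies the ambient Hilbert cube $Q'\supset K$; for $H=\ell_2$ this is the classical extension theorem for embeddings of compacta into $\ell_2$, and $\Sigma^\omega$ is covered by the very same statement. The remaining ingredients --- that singular cycles have compact support, that the colimit of $\tilde H_i$ over compact subsets recovers $\tilde H_i(H\setminus X)$, and the reduction to \theoref{main} --- are routine. The one point I would double-check carefully in a full write-up is the assertion that a closed subspace of the (possibly non-compact) weakly infinite dimensional space $X$ is again finite dimensional or weakly infinite dimensional; this standard hereditarity property is exactly what licenses invoking \theoref{main} inside $Q'$, and it is the main obstacle to be dispatched.
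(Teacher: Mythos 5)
Your proposal is correct and follows essentially the same route as the paper: represent the class by a singular cycle with compact support, use Proposition~\ref{known} to trap that support in an embedded Hilbert cube $Q'\subset H$, and then kill the cycle in $Q'\setminus X$ via Theorem~\ref{main}. The extra care you take --- replacing $X$ by the compactum $Q'\cap X$ and invoking closed-hereditarity of weak infinite dimensionality (a standard fact, seen by extending an essential map from a closed subspace and restricting the deformation) --- is left implicit in the paper's one-line argument, so your write-up is if anything more complete.
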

\begin{proof} In either of the cases we denote the space of metrics by $H$.
Let $f:K\to H\setminus X$ be a singular cycle. By Proposition~\ref{known} there is an embedding of the Hilbert cube $Q\subset H$ such that
$f(K)\subset Q$. By Theorem~\ref{main} $f$ is homologous to zero in $Q\setminus X$ and, hence, in $H\setminus X$.
\end{proof}


\begin{thebibliography}{[DFW]}

\bibitem[AP]{AP} P.S. Aleksandrov, B.A. Pasynkov, Introduction to dimension theory (Russian), Nauka, Moscow, 1973.

\bibitem[A]{A} A. Amarasinghe, {\em On separators of the space of complete
non-negatively curved metrics on the plane}, Topology Proc., to appear; preprint arXiv:1505.04452v1.

\bibitem[BB]{BB} T. Banakh, I. Belegradek, {\em Spaces of nonpositively curved spaces}, preprint arXiv: 1510.07269v1.


\bibitem[BCK]{BCK} T. Banakh, R. Cauty, A. Karassev. {\em On homotopical and homological $Z_n$-sets},
Topology Proc. 38 (2011), 29-82.

\bibitem[BH]{BH} I. Belegradek, J. Hu, {\em Connectedness properties of the space of complete
non negatively curved planes}, Math. Ann.,  364 (2016), no. 1-2, 711-712.


\bibitem[D]{D} A. Dranishnikov, {\em Generalized cohomological dimension of compact metric spaces},
Tsukuba J. Math. 14 (1990) no. 2, 247-262.

\bibitem[DW]{DW} A. Dranishnikov, J. West, {\em Compact group actions that raise dimension to infinity}, 
Topology Appl. 80 (1997), no. 1-2, 101-114 (Correction in Topology Appl. 135 (2004), no. 1-3, 249-252).

\bibitem[GW]{GW} D. Garity, D. Wright, {\em Vertical order in the Hilbert cube}, Illinois Journ. of Math. 31 No 3 (1987), 446-452.

\bibitem[E]{E} R. Engelking, Theory of dimension: finite and infinite, Heldermann
Verlag, 1995.

\bibitem[H]{H} A. Hatcher,
\newblock {Algebraic Topology}, Cambridge University Press, 2002.


\bibitem[K]{K} N. Kroonenberg, Characterization of finite-dimensional Z-sets. Proc. Amer. Math. Soc. 43 (1974),
421--427.


\bibitem[P]{P} E. Pearl, Open Problems in Topology II, Elsevier, 2007.



\bibitem[W]{W} G. W. Whitehead. Recent Advances in homotopy theory. Conference Board of the Mathematical 
Sciences Regional Conference Series in Mathematics, No. 5. American Mathematical Society, Providence, R.I., 1970.






\end{thebibliography}
\end{document}